\documentclass{article}
\usepackage{amsmath, amsfonts, amsthm, graphicx, geometry}

\newcommand{\V}{\mathcal{V}}

\newcommand{\si}{\textup{si}}

\theoremstyle{cupthm}
\newtheorem{thm}{Theorem}[section]

\newtheorem{prob}[thm]{Problem}
\theoremstyle{cupdefn}

\theoremstyle{cuprem}

\newtheorem*{unthm}{Theorem}

\newenvironment{enum}{
\begin{enumerate}
  \setlength{\itemsep}{1pt}
  \setlength{\parskip}{0pt}
  \setlength{\parsep}{0pt}}{\end{enumerate}
}

	\title{Axiomatisability of the Class of Monolithic Groups in a Variety 
	of Nilpotent 
	Groups} 
\author{Joshua T. Grice}

\begin{document}

\maketitle
	
	\begin{abstract}
		The class of all subdirectly irreducible groups belonging to a variety 
		generated by a finite nilpotent group can be axiomatised by a finite 
		set of 
		elementary sentences. 
	\end{abstract}

\section{Introduction}
A group is \emph{subdirectly irreducible} or \emph{monolithic} if it has some 
minimal nontrivial 
normal subgroup, called its monolith. A \emph{variety} of groups is a class of 
groups 
closed with respect to homomorphic images, subgroups, and arbitrary direct 
products. According to a fundamental result of Garrett Birkhoff in his 1935 
paper
\cite{Birkhoff1935},  a variety is also a proper class of groups that is 
axiomatised by some set of equations. Given a group $\mathbf{G}$, the variety 
generated by $\mathbf{G}$, denoted $\V(\mathbf{G})$, is the smallest variety to 
which $\mathbf{G}$ belongs, or equivalently the intersection of all varieties 
containing $\mathbf{G}$. 

By another theorem of Birkhoff's in a 1944 paper \cite{Birkhoff1944}, two 
varieties are equal 
if they share the same subdirectly irreducible members. Thus, given a variety 
$\V$, the class of its subdirectly irreducible members, commonly denoted 
$\V_{\si}$, is of particular interest. Subdirect irreduciblity as a property is 
not preserved by the formation of direct products, and so $\V_{\si}$ is not a 
variety and cannot be determined by a finite set of equations. But it can still 
be axiomatised by a finite set of elementary sentences, which are more general 
first-order statements. 

The main result of this paper is as follows. 
\begin{thm}\label{main}
Let $\V$ be a variety generated by a finite nilpotent group $\mathbf{G}$. Then, the class of subdirectly irreducible groups belonging to $\V$ is axiomatisable by a finite set of elementary sentences.
\end{thm}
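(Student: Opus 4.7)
The plan is to show that $\V_{\si}$ contains only finitely many finite groups up to isomorphism; finite axiomatisability is then immediate, since for each such isomorphism type $\mathbf{S}_j$ of order $n_j$ one has an elementary sentence $\sigma_j$ asserting the existence of $n_j$ distinct elements that realise the multiplication table of $\mathbf{S}_j$ and exhaust the universe, and $\sigma_1 \vee \cdots \vee \sigma_m$ then axiomatises $\V_{\si}$.

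The first reduction uses the nilpotence of $\mathbf{G}$. Write $\mathbf{G} \cong \mathbf{G}_{p_1} \times \cdots \times \mathbf{G}_{p_k}$ as the direct product of its Sylow subgroups, so $\V = \V(\mathbf{G}_{p_1}) \vee \cdots \vee \V(\mathbf{G}_{p_k})$. Every group in $\V$ is a nilpotent torsion group of exponent dividing that of $\mathbf{G}$, and so decomposes as the direct product $\mathbf{H}_{p_1} \times \cdots \times \mathbf{H}_{p_k}$ of its primary components, each $\mathbf{H}_{p_i}$ belonging to $\V(\mathbf{G}_{p_i})$ (a $p_i$-group appearing in a join of varieties of $p_j$-groups must project trivially to every factor for $j \neq i$). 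A subdirectly irreducible group has a unique minimal normal subgroup, and this subgroup sits in a single primary component, so all other components must be trivial. Hence $\V_{\si} = \bigcup_i \V(\mathbf{G}_{p_i})_{\si}$, and it suffices to axiomatise each of these separately and disjoin.

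Assume now that $\mathbf{G}$ is a finite $p$-group and let $\mathbf{S} \in \V_{\si}$ with monolith $\mathbf{M}$. Since $\mathbf{S}$ is nilpotent, $\mathbf{M}$ lies in the centre and has order $p$. By Birkhoff's HSP theorem, $\mathbf{S} \cong \mathbf{U}/\mathbf{N}$ for some subdirect product $\mathbf{U} \leq \mathbf{G}^I$ and normal subgroup $\mathbf{N} \triangleleft \mathbf{U}$. Setting $\mathbf{K}_i = \mathbf{U} \cap \ker(\pi_i)$ for the $i$-th coordinate projection, subdirect irreducibility yields a dichotomy: either some $\mathbf{K}_i$ lies in $\mathbf{N}$, in which case $\mathbf{S}$ is a quotient of $\pi_i(\mathbf{U}) \leq \mathbf{G}$ and $|\mathbf{S}| \leq |\mathbf{G}|$; or each of the normal subgroups $\mathbf{K}_i \mathbf{N}/\mathbf{N}$ is nontrivial and therefore contains the monolith $\mathbf{M}$.

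The principal obstacle is the second alternative above. I would combine the centrality of $\mathbf{M}$ with the relation $\bigcap_i \mathbf{K}_i = 1$ to try to force the preimage of $\mathbf{M}$ in $\mathbf{U}$ into $\mathbf{N}$ (a contradiction); failing a direct argument, I would refine the subdirect representation by replacing $\mathbf{U}$ with a critical subgroup of $\mathbf{G}^I$ and iterate, and appeal to the local finiteness of $\V$ (a consequence of the Oates--Powell theorem) to rule out infinite or oversized candidate SIs. Once the bound on SI members of each $\V(\mathbf{G}_{p_i})$ is secured, only finitely many isomorphism types arise in each factor, and the Sylow reduction together with the enumeration scheme from the opening paragraph finishes the proof.
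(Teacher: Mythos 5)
Your opening claim --- that $\V_{\si}$ contains only finitely many groups up to isomorphism, all finite --- is false, and the whole proposal rests on it. Take $\mathbf{G}$ to be the nonabelian group of order $p^3$ and exponent $p$ (for odd $p$; or take $D_4$ for $p=2$). Then $\V(\mathbf{G})$ contains every group of nilpotence class at most $2$ and exponent dividing $p$ (respectively, the appropriate class-two variety of $2$-groups), and in particular it contains every extraspecial $p$-group of exponent $p$. Each such group has centre of order $p$ coinciding with its derived subgroup; since every nontrivial normal subgroup of a $p$-group meets the centre nontrivially, that centre is the unique minimal normal subgroup, so these groups are all subdirectly irreducible --- and they exist in every order $p^{2n+1}$. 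Worse, the central product of infinitely many copies of $\mathbf{G}$ amalgamated over their centres is an \emph{infinite} subdirectly irreducible member of $\V$. So no disjunction of "multiplication table" sentences can axiomatise $\V_{\si}$: such a disjunction is only satisfied by finitely many finite groups. Your appeal to local finiteness does not help here; a locally finite variety can perfectly well have infinite (locally finite) subdirectly irreducible members, as this example shows. Your Sylow reduction in the second paragraph is fine as far as it goes, but it cannot rescue an approach whose target statement is false.

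The paper's actual route is necessarily different in kind: it does not bound the subdirectly irreducible members but instead axiomatises subdirect irreducibility directly in first-order logic. Following Baker--Wang and McNulty--Wang, one exhibits two \emph{normal closure formulas} $\Phi(x,y)$ and $\Psi(x,y)$ such that for every $\mathbf{S}\in\V_{\si}$ and every nonidentity $b$, some nonidentity $a\in b^{S}$ satisfies $\Psi(a,b)$ and has its normal closure defined by $\Phi(\,\cdot\,,a)$; then "there exists $u\neq 1$ lying in every nontrivial principal normal subgroup" becomes expressible by a single sentence, which together with the Oates--Powell equational basis for $\V$ axiomatises $\V_{\si}$. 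The group-theoretic content is a bound, depending only on the exponent $m$ and nilpotence class $k$ of $\mathbf{G}$, on the complexity of a conjugate product polynomial witnessing that some monolith element lies in $a^{S}$: one descends the upper central series one step at a time, using centrality modulo $Z_i$ and the exponent to cap each step at complexity $m$, giving an overall bound of $m^k$. If you want to salvage your write-up, this descent along the upper central series is the idea you are missing.
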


Finite axiomatisability of various structures has been studied at length by 
logicians, abstract algebraists and universal algebraists. In the realm of 
groups, Lyndon proved in 1952 \cite{Lyndon1952} that the variety generated by 
any nilpotent group is 
finitely axiomatisable. In 1965, Oates and Powell \cite{OatesPowell1964} proved 
that the variety generated by any  
finite group is finitely axiomatisable. In both of these cases, the axioms can 
be taken to be equations. 

A broader result than our main theorem was a hair's breadth from being proved 
by George F. McNulty and Wang Ju in 2000. A preprint was circulated in which it 
was claimed that the subdirectly 
irreducible members of a variety generated by any finite group was finitely 
axiomatisable. However, 
an error in 
the proof caused much of the work to be 
unusable. The current paper is an attempt to salvage some of that work, and has 
managed to prove the assertion with the added hypothesis of nilpotence. The 
author wishes to thank Dr. McNulty personally for his devoted guidance as a PhD 
advisor, and for this problem in particular. 

Baker and Wang proved in 2002 in \cite{BakerWang2002} that for certain kinds of 
varieties, $\V$ itself being finitely axiomatisable and $\V_{\si}$ being 
finitely 
axiomatisable are in fact equivalent. It is that result, and the aforementioned 
importance of subdirectly irreducible members of varieties, that motivates 
investigation into finite axiomatisability of subdirectly irreducible algebras.

\section{Preliminaries}

\subsection{Elementary Logic}
Our result dwells in first-order, or elementary logic, using the language of 
groups. The \emph{terms} of the language of groups are built up from variables 
representing elements of the group joined together with 
the multiplication and inverse operations of groups, and the named constant 
symbol 1 representing the identity. For instance, the 
conjugate $x y x^{-1}$ is a term in this language. The variables cannot be 
used to represent sequences or subgroups, only elements; hence the name 
elementary 
logic. 

An \emph{equation} is some statement of equality between two terms whose 
variables are understood to range over the whole group. For 
instance, the commutative law of Abelian groups can be expressed as an 
equation:
$$xy \approx yx$$
An \emph{elementary formula} is built up from equations in a systematic way 
with the help of
logical connectives $\vee, \wedge, \neg, \leftarrow,$ and $\leftrightarrow$ 
(conjunction, disjunction, negation, implication and biconditional, 
respectively), and the 
quantifiers $\exists$ and $\forall$. A formula may look something like:
$$\forall y \, (xy\approx yx) \wedge \neg(x\approx 1)$$
Note that in this formula, the variable $x$ appears but is not quantified. This 
makes $x$ a \emph{free variable,} and illustrates how formulas can be used to 
define sets of elements. If the above formula is named $\Phi$, for instance, 
the set defined by $\Phi(x)$ would be the set of all elements $x$ of a group 
that satisfy 
that formula. In this case, $\Phi(x)$ is the set of nontrivial elements of the 
group's center. 

If a formula has no free variables, it is called an \emph{elementary sentence}. 
A sentence in the language of groups is either true or false in a given group, 
whereas a formula depends on the value that the variables take. 
Sentences are useful for stating laws obeyed in a structure that cannot be 
expressed by equations alone. For example, the presence of an inverse for 
every element of a group: 
$$\forall x \,  \exists y \, (xy\approx 1)$$

Let $\mathcal{K}$ be a proper class of structures. If $\Sigma$ is a finite set 
of sentences so that a given structure $\mathbf{A}$ belongs to $\mathcal{K}$ if 
and only if it satisfies every sentence in $\Sigma$, we say that $\mathcal{K}$ 
is \emph{finitely axiomatisable}. When axiomatising varieties, we note that 
these sentences can all be taken to be equations. 

\subsection{Group Theory}
We define the \emph{normal closure} of a set $X$ of elements of a group 
$\mathbf{G}$ as the smallest normal subgroup containing $X$. We may also call 
it the normal subgroup generated by $X$. We denote this subgroup by $X^G$. If 
$X$ is a singleton set, say $X=\{a\}$, we call this subgroup a \emph{principal 
normal subgroup} and write $a^G$. Note that in a subdirectly irreducible group, 
the monolith cannot contain any nontrivial normal subgroup and is therefore 
always principal. 

Given two elements $a, b$ of a given group $\mathbf{G}$, their 
\emph{commutator} $[a,b]$ is the element $aba^{-1}b^{-1}$. The commutator 
operation can be extended to normal subgroups; the commutator of two normal 
subgroups  
$\mathbf{H}$
and $\mathbf{K}$ of $\mathbf{G}$ is defined as
$[\mathbf{H}, \mathbf{K}]=\{[h,k]: \, h\in  H, \,  k \in K\}$. 
The commutator of two normal subgroups is again a normal subgroup. Using the 
commutator 
operation, one may fabricate a \emph{lower central series} $G_0 \triangleright 
G_1 \triangleright G_2 \triangleright
\ldots$ where $G_0=\mathbf{G}$ and $G_i=[G, G_i]$. The group $\mathbf{G}$ is 
called \emph{nilpotent} of class $k$ if there is some $k$ for which 
$G_k=\{1\}$. 

An equivalent (and, for our purposes, more useful) definition of nilpotence
is the presence of an \emph{upper central series} $Z_0 \triangleleft Z_1 
\triangleleft Z_2 \triangleleft \ldots $ so that $\{1\}=Z_0$ and, for each $i$, 
$Z_{i+1}/Z_i=Z(\mathbf{G}/Z_i)$. The group $\mathbf{G}$ is nilpotent if there 
is some $k$ for which $Z_k=\mathbf{G}$. It is well-known that the length of the 
upper and lower 
central series coincides; a proof can be found in Dummit and Foote  
\cite{DummitFoote}. A third well-known characterisation of nilpotence exists; a 
group is 
nilpotent if and only if it is the direct product of its Sylow 
subgroups.

By Lyndon's work, the nilpotence class of a group can be captured with a 
finite 
set of equations \cite{Lyndon1952}. Therefore, if $\mathbf{G}$ is nilpotent of 
class $k$, any 
group $\mathbf{H} \in \V(\mathbf{G})$ is nilpotent of class at most $k$. 

Given a group $\mathbf{G}$, the normal subgroups of $\mathbf{G}$ form a 
lattice. If $K < H$ are normal subgroups of $\mathbf{G}$ and there exists no 
normal subgroup $N$ so that $H \leq N \leq K$, then $H/K$ is called a 
\emph{chief factor} of $\mathbf{G}$. According to Hanna Neumann's 1967 book 
\cite{Neumann}, the 
cardinality of chief factors in a variety generated by a finite group 
$\mathbf{G}$ is bounded above by $|\mathbf{G}|$. 

\section{Definable Principal Normal Subgroups}

Our proof of the finite axiomatisability of $\V_\si$ is contingent on a useful 
definition from Baker and Wang's 2002 paper \cite{BakerWang2002} that they 
called \emph{definable principal subcongruences.} Their formulation of this 
concept is from a perspective of general algebraic structures, so we will 
rework it here to focus purely through the spyglass of group theory. 

Let $\Phi(x,y)$ be an elementary formula. We will say that $\Phi$ is a 
\emph{normal closure formula} provided that for any group $\mathbf{H}$, if 
$\Phi(a,b)$ holds in $\mathbf{H}$, then $a$ belongs to the normal closure 
$b^\mathbf{H}$. For instance, the formula $$\exists z\, (x\approx zyz^{-1})$$ 
is 
a 
normal closure formula, since any conjugate of the element $b$ will belong to 
$b^\mathbf{H}$. Normal closure formulas are useful for capturing the principal 
normal subgroups of a given group or class of groups in a way that is 
compatible with 
first-order logic, which in turn can help to axiomatise the groups themselves. 
A class of groups might be highly compatible with such a 
capturing; for some classes of groups, there might be one normal closure 
formula $\Phi(x,y)$ that can define every principal normal subgroup of every 
group in the 
class. Baker and Wang's definition is not quite so strong as that, but is in 
many ways the next best thing. 

We will say that a class $\mathcal{K}$ of groups has \emph{definable principal 
normal subgroups} if and only if there are normal closure
formulas $\Phi(x,y)$ and $\Psi(x,y)$ so that for every $\mathbf{H} \in 
\mathcal{K}$ and every nonidentity $b \in H$, there exists a nonidentity $a \in 
H$ so that 
\begin{enum}
	\item $\mathbf{H} \models \Psi(a,b)$ and
	\item $\Phi(x,a)$ defines the normal closure of $a$. 
\end{enum}
In other words, if $b$ is an arbitrary element of $\mathbf{H}$, then $\Psi$ can 
find some nonidentity $a\in b^H$ so that $a^H$ is definable by $\Phi$.  

In their paper, Baker and Wang use this definition to prove another finite 
axiomatisability result. Their result applies to more general algebraic 
structures, but we express it in terms of groups. 

\begin{unthm}[Baker, Wang] Let $\V$ be a variety of groups and suppose that 
$\V$ 
has definable principal normal subgroups. Then, $\V$ is finitely axiomatisable 
if and 
only if $\V_{\si}$ is 
finitely axiomatisable. 
\end{unthm}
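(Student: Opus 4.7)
The plan is to prove each direction of the biconditional separately. The normal closure formulas $\Phi$ and $\Psi$ guaranteed by DPSC are the key tool in both directions, since they allow first-order logic to access the normal-subgroup structure on which subdirect irreducibility depends.

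For the direction ($\V$ finitely axiomatisable $\Rightarrow \V_{\si}$ finitely axiomatisable), I would adjoin to a finite equational axiomatisation $\Sigma$ of $\V$ a single sentence expressing subdirect irreducibility. Using the DPSC formulas, define
$$\tau := \exists m\, \Bigl[\, m \not\approx 1 \;\wedge\; \forall b\, \bigl(\, b \not\approx 1 \;\to\; \exists a\, (\, a \not\approx 1 \wedge \Psi(a,b) \wedge \Phi(m,a)\,) \,\bigr) \,\Bigr].$$
Then I claim that $\Sigma \cup \{\tau\}$ axiomatises $\V_{\si}$. The verification exploits that $\Phi$ and $\Psi$ are normal closure formulas: if $\mathbf{H} \models \tau$, then the witness $m$ lies in $a^{H} \subseteq b^{H}$ for every nontrivial $b$, so $m$ belongs to every nontrivial principal normal subgroup and hence to every nontrivial normal subgroup, making $\mathbf{H}$ subdirectly irreducible. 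Conversely, if $\mathbf{H} \in \V_{\si}$ has monolith $\mathbf{M}$ and $m \in M \setminus \{1\}$, then for each nontrivial $b$ the DPSC hypothesis supplies $a \neq 1$ with $\Psi(a,b)$ and $a^{H}$ definable by $\Phi(\cdot, a)$; since $a^{H}$ is a nontrivial normal subgroup it must contain $\mathbf{M}$, so $\Phi(m, a)$ holds.

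For the direction ($\V_{\si}$ finitely axiomatisable $\Rightarrow \V$ finitely axiomatisable), I would begin with the observation that $\V = SP(\V_{\si})$ by Birkhoff's subdirect representation theorem, so the equational theory of $\V$ is exactly the set of equations satisfied by every model of $\Sigma_{\si}$. The plan is a compactness argument: assume for contradiction that no finite subset of this equational theory axiomatises $\V$, so for each finite $\Sigma_0$ there is a group $\mathbf{G}_{\Sigma_0}$ satisfying $\Sigma_0$ but lying outside $\V$. Birkhoff's theorem then supplies a subdirectly irreducible quotient $\mathbf{H}_{\Sigma_0}$ of $\mathbf{G}_{\Sigma_0}$ not in $\V_{\si}$, and by pigeonhole on the finite $\Sigma_{\si}$, some fixed $\sigma \in \Sigma_{\si}$ fails in the $\mathbf{H}_{\Sigma_0}$ cofinally often. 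I would then choose an ultrafilter so that the ultraproduct of the $\mathbf{H}_{\Sigma_0}$ satisfies the full equational theory of $\V$ while still violating $\sigma$, and use DPSC together with the sentence $\tau$ above to force this ultraproduct to be subdirectly irreducible and thus in $\V_{\si}$, contradicting the failure of $\sigma$.

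The main obstacle is the second direction, specifically the step of preserving subdirect irreducibility in the ultraproduct. The sentence $\tau$ faithfully captures SI only \emph{within} $\V$: for groups outside $\V$ the DPSC hypothesis is not guaranteed, so the SI witnesses $\mathbf{H}_{\Sigma_0}$ need not satisfy $\tau$, and subdirect irreducibility itself is notoriously fragile under ultraproducts. Arranging the index structure and ultrafilter so that subdirect irreducibility survives into the ultralimit using only the fixed finite data $\Phi$, $\Psi$, $\Sigma_{\si}$ is the delicate technical core on which the backward direction depends.
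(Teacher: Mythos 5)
Your forward direction ($\V$ finitely axiomatisable $\Rightarrow$ $\V_{\si}$ finitely axiomatisable) is correct and is essentially identical to the argument the paper gives for Theorem 3.1: your sentence $\tau$ is the same sentence $\exists u\,[u\neq 1 \wedge \forall z\,(z\neq 1 \Rightarrow \exists x\,(\Phi(u,x)\wedge\Psi(x,z)))]$ used there, and your verification (the witness $m$ lies in $a^H\subseteq b^H$ for every nontrivial $b$, hence in every nontrivial normal subgroup; conversely a monolith generator serves as the witness because every nontrivial $a^H$ contains the monolith) is the paper's verification. Note that the paper does not actually prove the Baker--Wang biconditional; it imports it from \cite{BakerWang2002} and only proves the one-directional variant (Theorem 3.1), so there is no in-paper proof of the converse to compare yours against.

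The backward direction is where your proposal has a genuine gap, and it is the entire content of Baker and Wang's theorem. Your compactness set-up is fine up to a point: the subdirectly irreducible quotients $\mathbf{H}_{\Sigma_0}$ exist, lie outside $\V_{\si}$, and a fixed $\sigma\in\Sigma_{\si}$ fails cofinally, so an ultraproduct $\mathbf{U}$ can be arranged to satisfy the full equational theory of $\V$ (hence $\mathbf{U}\in\V$ by Birkhoff) while violating $\sigma$. But to reach a contradiction you need $\mathbf{U}\in\V_{\si}$, and nothing forces this. The sentence $\tau$ characterises subdirect irreducibility only for groups already in $\V$ (that is where DPSC applies), whereas each $\mathbf{H}_{\Sigma_0}$ lies outside $\V$ by construction, so $\tau$ need not hold on any set in the ultrafilter and \L{}o\'s's theorem gives you nothing; subdirect irreducibility itself is not a first-order property and is destroyed by ultraproducts in general. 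You name this obstacle yourself, but naming it does not discharge it: as written the argument terminates in ``$\mathbf{U}\in\V$, $\mathbf{U}\not\models\sigma$,'' which is no contradiction at all, since $\sigma$ is only required to hold on $\V_{\si}$. Baker and Wang's actual proof avoids ultraproducts entirely: the point of the DPSC formulas is that they let one express, by first-order sentences about a group $\mathbf{H}$ itself, that the quotients of $\mathbf{H}$ by its definable principal normal subgroups satisfy the (finitely many) axioms of $\V_{\si}$, and finitely many such relativised sentences, together with bookkeeping axioms, axiomatise $\V$ directly. That relativisation-to-definable-quotients step is the missing idea, and it is not recoverable from the ultrafilter bookkeeping you propose.
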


A variation on the proof of this theorem yields the following result, whose 
proof we reproduce from McNulty and Wang's unpublished work. Again, the theorem 
holds for more general structures, but we state and prove it in terms of 
groups. 

\begin{thm}\label{mcw}
If $\V$ is a variety of groups and $\V_{\si}$ has definable principal normal 
subgroups, 
then $\V_{\si}$ is finitely axiomatisable relative to $\V$. In particular, if 
$\V$ is finitely axiomatisable, then $\V_{\si}$ is finitely axiomatisable. 
\end{thm}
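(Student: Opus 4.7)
The plan is to produce a single elementary sentence $\sigma_{\si}$ with the property that $\V_{\si}=\{\mathbf{H}\in\V:\mathbf{H}\models\sigma_{\si}\}$; the ``in particular'' clause then follows immediately by adjoining $\sigma_{\si}$ to any finite axiomatisation of $\V$. The guiding reformulation of subdirect irreducibility is that $\mathbf{H}$ is subdirectly irreducible if and only if some nonidentity $m\in H$ lies in every nontrivial principal normal subgroup $b^H$ (every nontrivial normal subgroup of $\mathbf{H}$ contains some such $b^H$). The hypothesis on $\V_{\si}$ is tailored exactly so that ``$m\in b^H$'' can be expressed in first-order logic, after first passing from $b$ to a suitably chosen $a$.

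Let $\Phi$ and $\Psi$ be the formulas witnessing definable principal normal subgroups in $\V_{\si}$, and let $\sigma_{\si}$ be the sentence
\begin{equation*}
\exists m\,\Bigl[\,m\not\approx 1 \,\wedge\, \forall b\,\bigl(b\not\approx 1 \,\rightarrow\, \exists a\,(\Psi(a,b)\wedge\Phi(m,a))\bigr)\,\Bigr].
\end{equation*}
(A clause $a\not\approx 1$ is unnecessary: $\Phi$ being a normal closure formula forces $m\in a^H$, so $m\neq 1$ already rules out $a=1$.) To verify both inclusions: if $\mathbf{H}\in\V$ and $\mathbf{H}\models\sigma_{\si}$ with witness $m\neq 1$, then for each nontrivial $b\in H$ the quantified $a$ satisfies $a\in b^H$ and $m\in a^H\subseteq b^H$ simply because $\Psi$ and $\Phi$ are normal closure formulas; hence $m$ lies in every nontrivial normal subgroup of $\mathbf{H}$, and $\mathbf{H}\in\V_{\si}$. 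Conversely, if $\mathbf{H}\in\V_{\si}$ has monolith $M$, pick any nonidentity $m\in M$; given any nontrivial $b$, the hypothesis supplies a nonidentity $a$ with $\mathbf{H}\models\Psi(a,b)$ and with $\Phi(x,a)$ defining $a^H$, and since $a^H$ is a nontrivial normal subgroup it contains $M\ni m$, giving $\mathbf{H}\models\Phi(m,a)$.

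I do not anticipate a real obstacle; the only subtlety worth flagging during the write-up is the asymmetric way the definable principal normal subgroups hypothesis gets used. The universally valid half, namely that $\Phi$ and $\Psi$ are normal closure formulas in \emph{every} group, carries the forward direction, which must range over all of $\V$. The stronger half, that $\Phi(x,a)$ actually \emph{defines} $a^H$, is only guaranteed inside $\V_{\si}$ and is invoked solely in the backward direction, where we are free to assume $\mathbf{H}\in\V_{\si}$. One need only note that $\sigma_{\si}$ is genuinely a first-order sentence in the language of groups, which is immediate from $\Phi$ and $\Psi$ being elementary formulas.
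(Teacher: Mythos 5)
Your proposal is correct and is essentially the paper's own proof: the sentence you call $\sigma_{\si}$ is, up to renaming of variables and the order of the conjuncts, exactly the sentence the paper adjoins to $\Sigma$, and both directions of the verification (normal closure formulas giving $m\in a^H\subseteq b^H$ in one direction, the monolith generator witnessing the existential in the other) match the paper's argument. Your remark about the asymmetric use of the two halves of the definable principal normal subgroups hypothesis is a accurate reading of why the argument works, though the paper does not spell it out.
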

\begin{proof}
Let $\Sigma$ be a finite set of elementary sentences which axiomatises $\V$, 
and let $\Phi(x,y)$ and $\Psi(x,y)$ be the formulas witnessing that 
$\V_{\si}$ has definable principal normal subgroups. Let $\Theta$ be the 
following set of sentences: $$\Sigma \cup \{\exists u [u \neq 1 \wedge 
\forall z  (z\neq 1 \Rightarrow \exists x (\Phi(u,x) \wedge 
\Psi(x,z)))]\}$$
We claim that $\Theta$ axiomatises $\V_{\si}$. 

On one hand, suppose $\mathbf{S} \in \V_{\si}$. Let $c$ be a 
generator of the monolith of $\mathbf{S}$. So, $c \neq 1$ and $c$ belongs to 
every nontrivial normal subgroup. Now, let $b \in S-\{1\}$. Because $V_{\si}$ 
has definable principal normal subgroups, there exists some nonidentity $a \in 
S$ so 
that $S \models \Psi(a, b)$ and $\Phi(x,a)$ defines the normal closure $a^S$.  
Since $c$ generates the monolith, however, $c \in a^S$ also, and so $S 
\models \Phi(c,a)$. So, $$\mathbf{S} \models \exists u [u \neq 1 
\wedge 
\forall z  (z\neq 1 \Rightarrow \exists x (\Phi(u,x) \wedge 
\Psi(x,z)))]$$ Since $\mathbf{S}$ belongs to $\V$, $\mathbf{S} \models 
\Sigma$ also. Therefore, $\mathbf{S} \models \Theta$. 

Now, suppose $\mathbf{S} \models \Theta$. Then, $\mathbf{S} \in \V$ since 
$\Sigma$ axiomatises $\V$. But also, since $\mathbf{S}$ believes the second 
part of $\Theta$ and since $\Phi$ and $\Psi$ are normal closure formulas, there 
exists $c 
\in S-\{1\}$ so that $c$ is contained within 
any other principal normal subgroup. In particular, the principal normal 
subgroup $c^H$ is contained within any other principal normal subgroup 
of 
$\mathbf{S}$ and so $\mathbf{S}$ is subdirectly irreducible. 
\end{proof}

Thus, in view of Theorem \ref{mcw} and the Oates-Powell theorem, to prove our main result we need only prove the following. 
\begin{thm}\label{dpsc}
Let $\V$ be a variety generated by a finite nilpotent group $\mathbf{G}$. Then, 
$\V_{\si}$ has definable principal normal subgroups. 
\end{thm}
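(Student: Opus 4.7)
My plan is to exhibit explicit formulas $\Phi$ and $\Psi$ built from iterated commutator terms and bounded powers, exploiting the fact that in a nilpotent group we can descend from any nontrivial element into the centre while remaining inside its normal closure. First I would record two uniform bounds on $\V$: since $\mathbf{G}$ is finite nilpotent, it has some nilpotence class $k$ and finite exponent $n$, both equational conditions, so by Lyndon every $\mathbf{H} \in \V$ is nilpotent of class at most $k$ and satisfies $x^n \approx 1$.

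Writing $[y, z_1, \ldots, z_i]$ for the left-normed iterated commutator (with the convention that $i=0$ yields $y$ itself), I would define
\begin{align*}
\Psi(x,y) &:\equiv \bigvee_{i=0}^{k-1} \exists z_1 \cdots \exists z_i \, (x \approx [y, z_1, \ldots, z_i]), \\
\Phi(x,y) &:\equiv \bigvee_{j=0}^{n-1} (x \approx y^j).
\end{align*}
A short check shows that both are normal closure formulas: powers of $y$ lie in $y^H$, and iterated commutators of $y$ with elements of $H$ lie in $y^H$ by a routine induction using the fact that $y^H$ is a normal subgroup containing $y$.

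The core step is the choice of $a$. Given any nontrivial $b$ in any $\mathbf{H} \in \V_{\si}$, let $i_0$ be the largest $i \in \{0, \ldots, k-1\}$ for which some $h_1, \ldots, h_i \in H$ make $[b, h_1, \ldots, h_i]$ nontrivial; this exists because depth $0$ gives $b\neq 1$ while depth $k$ commutators vanish by nilpotence. Put $a := [b, h_1, \ldots, h_{i_0}]$. Then $a \neq 1$ and $\mathbf{H} \models \Psi(a, b)$ automatically, and the maximality of $i_0$ forces $[a, h] = [b, h_1, \ldots, h_{i_0}, h] = 1$ for every $h \in H$, so $a$ lies in $Z(\mathbf{H})$. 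Centrality of $a$ collapses $a^H$ to $\langle a \rangle$, which by bounded exponent equals $\{a^0, a^1, \ldots, a^{n-1}\}$ — precisely what $\Phi(x, a)$ defines.

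There is really no single hard step: the plan is direct once one notices that nilpotence combined with bounded exponent renders the principal normal subgroup of a central element trivially first-order definable, with only the ``descent to the centre'' via a maximal iterated commutator as a delicate observation. Interestingly, the argument never invokes subdirect irreducibility, so it in fact establishes the stronger statement that all of $\V$ (not merely $\V_{\si}$) has definable principal normal subgroups.
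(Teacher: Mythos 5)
Your proof is correct, and it takes a genuinely different and more economical route than the paper. The paper splits the work into two separate theorems: it defines the normal closure of an atom-generating element via conjugate product polynomials whose complexity is controlled by Neumann's bound $r$ on chief factors (Theorem \ref{atoms}), and it descends from $b$ toward the monolith one step of the upper central series at a time, using a minimality argument on conjugate product polynomials together with the exponent bound to keep the total complexity below $m^k$ (Theorem \ref{descent}). You replace both halves at once: the descent is accomplished in a single stroke by taking an iterated commutator $[b,h_1,\dots,h_{i_0}]$ of maximal nonvanishing depth, which lands in the centre rather than the monolith (this is exactly a definable version of the standard fact that every nontrivial normal subgroup of a nilpotent group meets the centre), and your defining formula $\Phi$ exploits the observation that the normal closure of a central element is just its cyclic subgroup, enumerable by powers up to the exponent $n$. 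What the paper's approach buys is an atom-defining formula that works in any variety generated by a finite group (only its descent step needs nilpotence) and a witness $a$ lying in the monolith itself; what yours buys is the elimination of the chief-factor bound and of the case analysis on conjugate product polynomials, plus --- as you correctly observe --- the stronger conclusion that all of $\V$, not merely $\V_{\si}$, has definable principal normal subgroups, which would even allow one to invoke the Baker--Wang theorem as stated instead of the variant in Theorem \ref{mcw}.
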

We now introduce some machinery that will let us quantify principal normal 
subgroup inclusion in a first-order way. The set of \emph{conjugate product 
terms} in $x$ of a variety of groups is the 
smallest set $C$ of terms so that 
\begin{itemize} 
\item $1 \in C$
\item If $t \in C$ and $y$ is a variable, then both $(yxy^{-1})t$ and $(yx^{-1}y^{-1})t$ belong to $C$. 
\end{itemize}
The defintion is apt; $C$ is the set of all terms made by taking products of conjugates of $x$ and $x^{-1}$. A sample member of $C$ might be
$$t(x, y_0, y_2, y_7)= y_0 x y_0^{-1} y_2 x^{-1} y_2^{-1} y_7 x y_7^{-1}$$
A \emph{conjugate product polynomial} is a unary polynomial $\pi(x)$ forged from some conjugate product term. We might write $\pi(x,\bar{y})$ if we wish to specify the parameters. So, for instance, in some group $\mathbf{H}$, we might choose members $c_0, c_2, c_7 \in H$ and, from our prior example, obtain the following conjugate product polynomial $$\pi(x)=t(x, c_0, c_2, c_7)=c_0 x c_0^{-1} c_2 x^{-1} c_2^{-1} c_7 x c_7^{-1}$$

Conjugate product polynomials are a powerful tool in groups; they are capable 
of defining principal normal subgroups. The normal closure of an element $a$, 
for instance, is the collection of products of conjugates of $a$ and $a^{-1}$, 
which is precisely the outputs of the sets of conjugate product polynomials in 
$a$. This arms us with a method of defining principal normal subgroups with 
objects that are easily written in first-order logic. 

We refer to a statement of the form $a \in c^H$ as a \emph{membership 
condition}. Membership conditions are our main object of interest in trying to 
establish definable principal normal subgroups, and we now have technology in 
the 
form of conjugate product polynomials to witness them. Our strategy in the 
proof will be to show that these conditions can be witnessed with a limited 
number of variables. This will enable us to quantify the witnessing using a 
first-order statement. In this paper, the \emph{complexity} of a conjugate 
product polynomial refers to the number of conjugates present in the 
product. Our previous example has complexity 3. 
\section{Proving Theorem \ref{dpsc}}

In order to show that $\V_{\si}$ has definable principal normal subgroups, as 
desired, we need two different normal closure formulas. The first, $\Psi(x,y)$, 
to seek out some definable principal normal subgroup of any given principal 
normal subgroup, and the second, 
$\Phi(x,y)$, to do the defining. We will prove the existence of 
$\Phi$ first, using a proof of McNulty and Wang that appears in their 
unpublished paper that they have kindly allowed to be presented here. By an 
\emph{atom} we mean a nontrivial normal subgroup $N$ of $\mathbf{G}$ which does 
not 
properly contain any other nontrivial normal subgroups of $\mathbf{G}$. 

\begin{thm}\label{atoms}
Let $\V$ be the variety generated by a finite group. Then, there is a 
normal closure formula $\Phi(x,y)$ such that for any $\mathbf{H} \in \V$ and 
every 
$c\in H$ such that $c^H$ is an atom in the lattice of normal subgroups of 
$\mathbf{H}$, 
it follows that 
$\Phi(x,c)$ defines $c^H$. 
\end{thm}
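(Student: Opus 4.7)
The plan is to combine two ingredients: Neumann's bound from the preliminaries, which caps the cardinality of chief factors in $\V$ at $n := |\mathbf{G}|$, and the elementary fact that every element of a principal normal subgroup is the value of some conjugate product polynomial in the generator. The first ingredient will uniformly cap the size of any atom $c^{\mathbf{H}}$ across $\mathbf{H} \in \V$; the second will then let us turn this cap into a uniform bound on the \emph{complexity} of the conjugate product polynomials needed to realise every element of the atom. Once such a bound is in hand, $\Phi$ can be taken to be a single finite disjunction over all conjugate product terms of that complexity.

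In more detail, if $c^{\mathbf{H}}$ is an atom then the pair $(c^{\mathbf{H}}, \{1\})$ is a chief factor of $\mathbf{H}$, so $|c^{\mathbf{H}}| \leq n$ by Neumann's theorem. A Cayley-graph argument will then close the gap: the symmetric set $S := \{hc^{\pm 1}h^{-1} : h \in H\}$ generates $c^{\mathbf{H}}$ as a subgroup, and if $T_k$ denotes the set of products of at most $k$ elements of $S$ (with $T_0 = \{1\}$), then at each step either $|T_{k+1}| > |T_k|$ or $T_k$ is closed under left multiplication by $S$ and therefore already equals $c^{\mathbf{H}}$. Iterating at most $n-1$ times forces $T_{n-1} = c^{\mathbf{H}}$, so every element of an atom is realised by some conjugate product polynomial in $c$ of complexity at most $n-1$.

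With the bound secured, I will set
$$\Phi(x,y) \;:=\; \bigvee_t \exists \bar{z}\, \bigl(x \approx t(y, \bar{z})\bigr),$$
where $t$ ranges over the finitely many conjugate product terms of complexity at most $n-1$. Any disjunct forces $x$ to be a product of conjugates of $y$ and $y^{-1}$, so $\Phi$ is a normal closure formula for free. For an atom $c^{\mathbf{H}}$, the inclusion $\{x : \mathbf{H} \models \Phi(x,c)\} \subseteq c^{\mathbf{H}}$ is then automatic, while the reverse inclusion is exactly the complexity bound just established. The only genuinely load-bearing step is that bound, which rests entirely on identifying an atom as a chief factor; I do not anticipate a serious obstruction, and note that the nilpotence hypothesis of the main theorem plays no role in this sub-theorem.
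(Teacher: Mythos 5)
Your proposal is correct and follows essentially the same route as the paper: identify the atom $c^{\mathbf{H}}$ as a chief factor, invoke Neumann's cardinality bound, convert that into a uniform bound on the complexity of the conjugate product polynomials witnessing membership, and take $\Phi$ to be the resulting finite disjunction. The only cosmetic difference is the pigeonhole step — the paper observes that the partial products of a minimal-length expression are distinct elements of $c^{\mathbf{H}}$, while you grow balls in the Cayley graph of the conjugate generating set — but these are the same counting argument in different clothing.
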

\begin{proof}
Let $r$ be a finite upper bound on the size of chief factors in algebras 
belonging to $\V$. Then, we claim that if $c^H$ is an atom 
for some $c \in \mathbf{H}$, then any membership condition of the form 
$a \in c^H$ can be witnessed by a conjugate 
product polynomial of complexity no more than $r$. 

If $a \in c^H$, then $a=g_0 g_1 \cdots g_{n-1}$ where each $g_i$ is some 
conjugate of either 
$c$ or $c^{-1}$. If $n$ is chosen to be as small as 
possible, $$g_0, \quad g_0 g_1, \quad g_0 g_1 g_2,  \ldots, g_0g_1\cdots  
g_{n-1}$$ are 
$n$ distinct elements of $c^H$. Now, since 
$c^H$ is an atom, $c^H/\{1\}$ is a chief factor. 
$|c^H|=|c^H/\{1\}|\leq r$, so $n \leq r$. 

Now, let $T$ be the set of all conjugate product terms in the signature of $\V$ 
whose parameters are chosen from the distinct variables $u_0, \ldots, u_{r-1}$. 
Since there are only finitely many variables being used, $T$ is finite. 
Now, let $\Phi(x,y)$ be the sentence

$$\exists u_0, \ldots, u_{r-1} \left[\bigvee_{t \in T}   t(y, 
\bar{u})\approx 
x\right]$$

$\Phi(x,c)$ now defines $c^H$ whenever $c^H$ is an atom. 
\end{proof}
Theorem \ref{atoms} gives us a normal closure formula that can define any atoms 
in 
any group belonging to $\V$; in particular, for any group in $\V_{\si}$, this 
formula will always define the group's monolith. The second formula 
that we need, $\Psi(x,y)$, will come from the following theorem, which is 
the original work of this paper. 

\begin{thm}\label{descent}
Let $\V$ be a variety generated by a group  
$\mathbf{G}$ of finite exponent $m$ and nilpotence class $k$. Let $\mathbf{S} 
\in \V_{\si}$. Then, 
given 
any $a \in \mathbf{S}$, there is some $b$ 
belonging to the monolith of $\mathbf{S}$ so that the membership condition 
$b \in a^S$ is witnessed by a conjugate 
product polynomial of complexity bounded above in terms of the generating group 
$\mathbf{G}$. 
\end{thm}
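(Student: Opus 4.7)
The plan is to exploit nilpotence in two stages: a ``central descent'' by iterated commutators into $Z(\mathbf{S})$, followed by a suitable power that lands us in the monolith. Throughout, $\mathbf{S}$ is nilpotent of class at most $k$ and has exponent dividing $m$, since by Lyndon's theorem both the nilpotence-class equations and the equation $x^m \approx 1$ pass from $\mathbf{G}$ to every member of $\V$.

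First I would record the structure of the monolith $M$ of $\mathbf{S}$. In any nilpotent group every nontrivial normal subgroup meets the centre nontrivially, so $M \subseteq Z(\mathbf{S})$; being principal and central, $M$ is cyclic, and since every subgroup of a central subgroup is normal, minimality in the normal-subgroup lattice forces $M$ to be cyclic of prime order $p$ for some prime $p \mid m$.

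For the descent, given nonidentity $a \in S$, let $i$ be least with $a \in Z_i$, so $1 \leq i \leq k$. If $i \geq 2$ then $a \notin Z_1 = Z(\mathbf{S})$, so some $s_1 \in S$ satisfies $[a, s_1] \neq 1$, and $[Z_i, \mathbf{S}] \subseteq Z_{i-1}$ places $[a, s_1]$ in $Z_{i-1}$. Iterating yields $s_1, \ldots, s_j$ with $j \leq k-1$ and
\[
b' := [[\ldots[a, s_1], s_2], \ldots, s_j] \in Z(\mathbf{S}) \setminus \{1\}.
\]
A short induction on $j$ shows that if $\pi(x)$ is a conjugate product polynomial of complexity $n$, then the commutator $[\pi(x), s] = \pi(x) \cdot s \cdot \pi(x)^{-1} \cdot s^{-1}$ can be rewritten as a conjugate product polynomial of complexity $2n$ by telescoping $s$ and $s^{-1}$ into the conjugating parameters between the successive $a^{\pm 1}$'s. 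Hence $b'$ is witnessed by a conjugate product polynomial in $a$ of complexity at most $2^{k-1}$.

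Finally, $\langle b' \rangle$ is a nontrivial central, hence normal, subgroup of $\mathbf{S}$, so $M \subseteq \langle b' \rangle$. Writing $n = |b'|$, which divides $m$ and is divisible by $p$, the element $b := (b')^{n/p}$ has order $p$ and so lies in $M \setminus \{1\}$. Expressing $b$ as the product of $n/p \leq m$ copies of the polynomial witnessing $b'$ yields a conjugate product polynomial in $a$ witnessing $b \in a^S$ of complexity at most $m \cdot 2^{k-1}$, a bound depending only on $\mathbf{G}$. The chief obstacle is the combinatorial bookkeeping in the descent step: verifying that each iterated commutator is literally a conjugate product polynomial in $a$ of complexity $2^j$, i.e.\ that the ``glue'' $s_i^{\pm 1}$ really does telescope into legitimate conjugating parameters between the $a^{\pm 1}$'s, not merely that the result is equal to such a polynomial after group-theoretic rearrangement.
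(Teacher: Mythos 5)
Your proof is correct, but it takes a genuinely different route from the paper's. The paper also walks down the upper central series, but it does so nonconstructively: at each level it picks a nonidentity element of $Z_i \cap a_{i+1}^{S}$ (nonempty because, by subdirect irreducibility, the monolith itself lies in there) whose witnessing conjugate product polynomial has \emph{minimal} complexity, and then a case analysis on the signs of the conjugates --- using $[Z_{i+1},\mathbf{S}]\subseteq Z_i$ together with the law $x^m\approx 1$ --- shows that this minimal complexity is at most $m$ per level, for a total bound of $m^k$; the final step from $Z_1$ into the monolith is handled by the same minimality argument. You instead descend by explicit iterated commutators $[\,\cdot\,,s_i]$, each of which is guaranteed to drop at least one level of the central series while merely doubling the complexity (your telescoping computation $s\,\pi(x)^{-1}s^{-1}=(sc_n)x^{-e_n}(sc_n)^{-1}\cdots(sc_1)x^{-e_1}(sc_1)^{-1}$ is the right verification and does go through), reaching a nonidentity central element at cost $2^{k-1}$; you then need the extra structural observations that $M\subseteq Z(\mathbf{S})$ and that $M$ is cyclic of prime order $p$, so that $M$ is recovered as the unique order-$p$ subgroup of $\langle b'\rangle$ via the power map $b=(b')^{n/p}$, costing a further factor of at most $m$. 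Your bound $m\cdot 2^{k-1}$ is sharper than the paper's $m^k$ and the argument is more constructive, at the price of invoking the classification of the monolith of a nilpotent subdirectly irreducible group; the paper's minimal-witness trick avoids that but requires the two-case sign analysis. Both bounds depend only on $\mathbf{G}$, which is all the theorem requires.
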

\begin{proof}
Since $\mathbf{S} \in \V$, the exponent of $\mathbf{S}$ divides that of 
$\mathbf{G}$, as the equation $x^m=1$ holds throughout $\V$. We also know that 
the nilpotence class $k$ of $\mathbf{S}$ 
is bounded above by that of $\mathbf{G}$; we harmlessly assume it is $k$.  
Denote the upper central series of 
$\mathbf{S}$ as $$\{1\}=Z_0 \triangleleft Z_1 \triangleleft \ldots \triangleleft 
Z_k=\mathbf{S}$$
Note that $Z_1$ is the center of $\mathbf{S}$, which contains the monolith $M$ 
of $\mathbf{S}$. Choose any arbitrary $a\in S$. If $a \in M$, then no more work 
is needed, so we can assume it is not. Label $a=a_k$; now, we will form a 
sequence of elements walking down the steps of the central series that form a 
chain of principal normal subgroups.
Given $a_{i+1} \in Z_{i+1}$, we will seek 
out $a_i$ so that the following hold:
\begin{enum}
\item $a_i \in Z_i$
\item $a_i\neq 1$
\item $a_i  \in a_{i+1}^S$ and this fact is witnessed by a conjugate product 
polynomial of complexity at most $m$. 
\end{enum}
We can certainly find $a_i \in Z_i$ so that $a_i \in a_{i+1}^S$; since 
$\mathbf{S}$ is subdirectly irreducible, any element of the monolith $M$ will 
suffice. We choose $a_i$ from all such possible nonidentity candidates in $Z_i$ 
so that the conjugate product polynomial $\pi_i$ that witnesses $\pi_i(a_{i+1}, 
\bar{c})=a_i$ has minimal possible complexity, and claim that this satisfies 
our above three requirements. The first two are already satisfied, so we need 
only worry about the complexity of $\pi_i$. 

$\pi_i$ takes the form $\pi_i(x, \bar{c})= c_0 x^{\pm 1} c_0^{-1} c_1 x^{\pm 1} c_1^{-1}, \ldots c_n x^{\pm 1} c_n^{-1}$ for some $n$. The structure of this polynomial breaks down into two cases. \\

Case 1) There are both positive and negative conjugates present in $\pi_i$. 	
So, $\pi_i$ contains, somewhere, a product of the form $$c_j x c_j^{-1}c_{j+1} 
x^{-1} c_{j+1}^{-1}$$ (or perhaps the same product with the negative conjugate 
on the left). We claim that these two conjugates are the whole of $\pi_i$, and 
that the element $c_j a_{i+1} c_j^{-1}c_{j+1} a_{i+1}^{-1} c_{j+1}^{-1}$, which 
we will 
temporarily call $a_i^*$, is in fact $a_i$ itself. Indeed, $a_i^*$ cannot be 1; 
if it were, these two conjugates could be removed from $\pi_i$ to preserve the 
given membership condition with a shorter polynomial, contradicting $\pi_i$'s 
minimality. Clearly, $ a_i^* \in a_{i+1}^S$. So all we need to do is show that 
$a_i^* \in Z_i$, and then the minimal complexity of $\pi_i$ will do the rest of 
the work for us. 

Now, $Z_{i+1}/Z_i$ is the center of $\mathbf{S} / Z_i$, so $a_{i+1} /Z_i$ commutes with every member of $\mathbf{S} / Z_i$. So, we have 
\begin{align*}
a_i^*/Z_i&= (c_j a_{i+1} c_j^{-1}c_{j+1} a_{i+1}^{-1} c_{j+1}^{-1})/Z_i\\
&=(c_j/Z_i) (a_{i+1}/Z_i) (c_j^{-1}/Z_i)(c_{j+1}/Z_i)(a_{i+1}^{-1}/Z_i)(c_{j+1}^{-1}/Z_i)\\
&= (a_{i+1}/Z_i)(a_{i+1}^{-1}/Z_i)\\
&= 1/Z_i
\end{align*}
So, $a_i^* \in Z_i$. So, $a_i^*=a_i$, and the complexity of the polynomial 
needed to witness the membership $a_i \in a_{i+1}^S$ is 2, which is certainly 
less than the exponent $m$ of $\mathbf{G}$ unless the variety is trivial. \\

Case 2) The conjugates present in $\pi_i$ are either all positive or all 
negative. We 
assume that the conjugates are all positive; if they are all negative, the 
proof is almost identical. In this case, we claim that the complexity of 
$\pi_i$ is at most $m$. The argument is similar to case 1. Suppose the 
complexity is at least $m$; then, look at $a_i^*=c_0 a_{i+1} c_0^{-1} c_1 
a_{i+1} c_1^{-1} \ldots c_{m-1} a_{i+1} c_{m-1}^{-1}$. We claim that $a_i^*$ 
is, again, $a_i$. As in case 1, $a_i^*$ satisfies criteria 2 and 3, so it only 
remains to show $a_i^* \in Z_i$. Again, we have 
\begin{align*}
a_i^*/Z_i&=(c_0 a_{i+1} c_0^{-1} c_1 a_{i+1} c_1^{-1} \ldots c_{m-1} a_{i+1} c_{m-1}^{-1})/Z_i \\
&=(c_0/Z_i) (a_{i+1}/Z_i) (c_0^{-1}/Z_i) \ldots (c_{m-1}/Z_i) (a_{i+1}/Z_i) (c_{m-1}^{-1}/Z_i) \\
&= (a_{i+1}/Z_i)^m\\
&= 1/Z_i
\end{align*}
since the exponent of any algebra in $\V$ divides $m$. So, again by minimality of $\pi_i$, we have that $a_i^*=a_i$, and so our polynomial has complexity at most $m$.

So, we have a sequence $(a_i)_{i=1}^k$ that walks down through the upper 
central series 
of $\mathbf{S}$, all the way down to $a_1$ which belongs to the center of 
$\mathbf{S}$. We can also walk $a_1$ down to some $a_0$ in the monolith via a 
polynomial $\pi_0$; the same proof suffices, as $Z_1$ is Abelian, so in 
particular its elements commute with every element of $M$. $a_0 \in a^S$, as 
witnessed by the composition of each of the conjugate product polynomials 
$\pi_i$, which is itself a conjugate product polynomial. The complexity of the 
composition is bounded above by $m^k$. This completes the proof. 
\end{proof}

Now, we can complete the proof of Theorem \ref{dpsc}. Let $T$ be the set of all 
conjugate product terms in the signature of $\V$ whose parameters are chosen 
from the distinct variables $u_0, \ldots, u_{m^{k}-1}$. Since the list of 
variables is finite, there are finitely many such terms. Now, let $\Psi(x,y)$ 
be the sentence

$$\exists u_0, \ldots, u_{m^k-1} \left[\bigvee_{t \in T}   t(y, 
\bar{u})\approx x\right]$$

Let $\Phi(x,y)$ be the normal closure formula from Theorem \ref{atoms} that 
defines all atoms of congruence lattices of algebras in $\V$. Together, 
$\Phi(x,y)$ and $\Psi(x,y)$ witness that $\V_{\si}$ has definable 
principal normal subgroups.  \hfill \qedsymbol

\section{Future Research}
A number of natural followup questions to this result present themselves and 
beg to be investigated. For one, the original conjecture of McNulty and Wang is 
still open, as the current paper has only gone partway to solving it. 
\begin{prob} If $\V$ is a variety generated by a finite group $\mathbf{G}$, is 
it true that $\V_{\si}$ is finitely axiomatisable? \end{prob}

If a counterexample can be found to the conjecture, it is natural to want to 
know how far the finite axiomatisability can be taken. Is nilpotence the best 
we can do, or are there broader or perhaps unrelated classes of groups for 
which our result hold?

\begin{prob} If $\V$ is a variety generated by some group $\mathbf{G}$, what 
	conditions have to be met by $\mathbf{G}$ in order for $\V_{\si}$ to be 
	finitely axiomatisable? \end{prob}

Universal algebraists will of course wish to extend this result to more general 
algebras besides just groups. We define an 
\emph{algebra} to be a nonempty set $A$ along with finitary operations which 
define functions on $A$. Groups, rings, vector spaces and lattices all satsify 
this rather broad definition. Departing the isle of groups in favour of such 
general waters robs us of the concept of normal subgroups, so in universal 
algebra the attention is usually given to congruence relations. 

A \emph{congruence relation} on an algebra $\mathbf{A}$ is  
the 
relational kernel of some homomorphism. That is, given a homomorphism $h$, the 
associated congruence is the relational kernel $\{\langle a, b \rangle : 
h(a)=h(b)\}$. 
Congruences are also precisely the 
equivalence relations on $\mathbf{A}$ that are also subalgebras of 
$\mathbf{A}^2$. In groups, the equivalence classes of congruences are the left
cosets of the normal subgroup that is the kernel of the homomorphism in 
question. The congruence class containing 1 
is the kernel of the homomorphism. In algebras in general, we lack a unit 
element, and so the congruence relation as a kernel must replace the normal 
subgroup in 
most 
discussion.  

The concept of the commutator in group theory has also
been extended to a general commutator that exhibits similar properties in 
general algebras, at least in the case when the congruence lattices are 
modular, as is the case in groups. We refer readers to Freese and McKenzie's 
1987 work 
\cite{FreeseMcKenzie1987} for an 
exhaustive discussion of commutator theory. This commutator enables an echo of 
concepts like Abelianness, nilpotence and 
solvability in varieties of algebras whose congruence lattices are modular. The 
algebraic commutator provides us with a lower central 
series. The concept of Abelian algebras similarly provides us with an upper 
central series, and as in groups, these two series have the same length. 

Unfortunately, of the three characterisations of nilpotence we described 
earlier, the upper and lower central series are the only ones that extend to 
the algebraic commutator. The characterisation of a nilpotent group as one 
which is the product of its Sylow subgroups does not extend to general 
algebras, even in the otherwise well-behaved congruence modular varieties. 
What's worse, the notion of conjugate product polynomials is 
not so easily translated to algebras either, so the method of this paper does 
not seem to 
generalise beyond group theory. However, other methods might bear some fruit. 
The author has proved 
that the result extends to varieties of nilpotent algebras with the added 
hypothesis that the generating algebra is a product of algebras of prime power 
order.  This 
proof should turn up in future publication. Nilpotent algebras in general 
remain a tougher nut to crack. 

\begin{prob}If $\V$ is a congruence modular variety generated by a finite 
nilpotent 
algebra 
$\mathbf{A}$, is it true that $\V_{\si}$ is finitely axiomatisable? \end{prob}

\bibliographystyle{srtnumbered}
\bibliography{NPref}

\end{document}